\title[A family of infinite degree tt-rings]{A family of infinite degree tt-rings}
\author[Juan Omar G\'omez]{Juan Omar G\'omez}
\thanks{}
\address{
\hfill\break Centro de Investigaci\'on en Matem\'aticas, A.C., Unidad M\'erida \\
\hfill\break Parque Cient\'ifico y Tecnol\'ogico de Yucat\'an  \\ 
\hfill\break Carretera Sierra Papacal--Chuburn\'a Puerto Km 5.5 \\
\hfill\break Sierra Papacal, M\'erida, YUC 97302 \\
\hfill\break Mexico.}
\email{juan.gomez@cimat.mx}
\newcommand{\comments}[1]{}
\newcommand{\StMod}{\operatorname{StMod}\nolimits}
\def \K{{\mathcal K}}
\def \P{{\mathcal P}}
\newcommand*{\doublerightarrow}[2]{\mathrel{
  \settowidth{\@tempdima}{$\scriptstyle#1$}
  \settowidth{\@tempdimb}{$\scriptstyle#2$}
  \ifdim\@tempdimb>\@tempdima \@tempdima=\@tempdimb\fi
  \mathop{\vcenter{
    \offinterlineskip\ialign{\hbox to\dimexpr\@tempdima+1em{##}\cr
    \rightarrowfill\cr\noalign{\kern.5ex}
    \rightarrowfill\cr}}}\limits^{\!#1}_{\!#2}}}
\newcommand*{\triplerightarrow}[1]{\mathrel{
  \settowidth{\@tempdima}{$\scriptstyle#1$}
  \mathop{\vcenter{
    \offinterlineskip\ialign{\hbox to\dimexpr\@tempdima+1em{##}\cr
    \rightarrowfill\cr\noalign{\kern.5ex}
    \rightarrowfill\cr\noalign{\kern.5ex}
    \rightarrowfill\cr}}}\limits^{\!#1}}}
\theoremstyle{plain}
\newtheorem{theorem}{Theorem}[section]
\theoremstyle{definition}
\newtheorem{definition}[theorem]{Definition}
\newtheorem{remark}[theorem]{Remark}
\newtheorem{example}[theorem]{Example}
\keywords{Degree, tt-ring, tensor triangulated category. }
\subjclass[2020]{18G80}
\thanks{The author acknowledges support by CONACYT under the program ``Becas Nacionales de Posgrado''.}
\date{\today}
\begin{document}

\maketitle

\begin{abstract}
    We construct a family of infinite degree tt-rings, giving a negative answer to an open question by P. Balmer. 
\end{abstract}

\section*{Introduction}

The abstraction of tensor-triangular geometry makes it possible to connect ideas and unify techniques in the study of tensor triangulated categories arising in different areas of mathematics, including algebraic geometry, commutative algebra, modular representation theory and stable homotopy theory. We refer to \cite{Bal10} for an account of \textit{standard} tensor triangulated categories.  

A \textit{tt-ring} is a separable commutative algebra object in a tensor triangulated category. These objects play an important role in tensor-triangular geometry. For instance, the Eilenberg-Moore category of modules over a tt-ring remains a tensor triangulated category, and extension of scalars is a tt-functor. The notion of degree introduced by Balmer in \cite{Bal11} has been successfully exploited in many applications: notably, in establishing a connection between the Going-Up Theorem and Quillen's Stratification Theorem (see \cite{Bal16}) and generalizing the  \'etale topology (see \cite{NP23}), both in the setting of tensor-triangular geometry.  

Remarkably, all tt-rings in standard tensor triangulated categories have finite degree \cite[Section 4]{Bal11}. It is an open question in \cite[Page 2]{Bal11} whether the degree of a tt-ring must always be finite. We provide a family of infinite degree tt-rings, giving a negative answer to this question. In fact, this family extends to a family of infinite degree rigid-compact tt-rings in the framework of rigidly-compactly generated tensor triangulated categories.

\section{Infinite degree tt-rings}

For $i\in \mathbb{N}$, let $\K_i$ be a non-trivial essentially small tensor triangulated category. Define $$\K:=\prod_{i\in\mathbb{N}} \K_i.$$  It is clear that $\K$ is essentially small; the product of small skeletons in each component defines a small skeleton of $\K$. We give $\K$ a triangulated structure and a symetric monoidal structure, both component-wise. In particular, $\K$ is a non-trivial essentially small tensor triangulated category.

\begin{theorem}\label{example of infinite degree}
 Let $\K_n$ and $\K$ as above and let $\mathbb{1}_n$ denote the monoidal unit of $\K_n$. Then the tt-ring $$A:=(\mathbb{1}_n^{\times n})_{n\in \mathbb{N}}\in \K$$ has infinite degree with the component-wise tt-ring structure.  
\end{theorem}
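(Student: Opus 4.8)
The plan is to project $\K$ onto each of its factors and thereby reduce, component by component, to the degree of the ``trivial'' rank-$n$ tt-ring $\mathbb{1}_n^{\times n}$ in $\K_n$. First I would record that $A$ is indeed a tt-ring: in each $\K_n$ the $n$-fold product $\mathbb{1}_n^{\times n}$ is a commutative separable algebra, its multiplication being split as a bimodule map by the diagonal idempotent of $\mathbb{1}_n^{\times n}\otimes\mathbb{1}_n^{\times n}$; and since the triangulated and symmetric monoidal structures on $\K=\prod_i\K_i$ are both componentwise, the componentwise tuple $A=(\mathbb{1}_n^{\times n})_n$ is again a commutative separable algebra in $\K$, with separability section the tuple of those of the $\mathbb{1}_n^{\times n}$.

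Now fix $n\in\mathbb{N}$ and let $\pi_n\colon\K=\prod_i\K_i\to\K_n$ be the projection onto the $n$-th factor. It is exact and strong symmetric monoidal, hence a tt-functor, and by construction $\pi_n(A)=\mathbb{1}_n^{\times n}$. The two inputs I would then invoke, both part of Balmer's basic theory of degree in \cite{Bal11}, are: (i) the degree of a tt-ring does not increase along a tt-functor, so that $\deg(\mathbb{1}_n^{\times n})=\deg(\pi_n A)\le\deg(A)$; and (ii) $\deg(\mathbb{1}_n^{\times n})=n$, which holds in any non-trivial tt-category and so in particular in $\K_n$. Together these give $n\le\deg(A)$ for every $n\in\mathbb{N}$, and hence $\deg(A)=\infty$. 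If one prefers to argue locally in place of (i): up to equivalence $\pi_n$ is the Verdier quotient of $\K$ by the thick $\otimes$-ideal of tuples vanishing in the $n$-th coordinate, so each point of $\operatorname{Spc}(\K_n)$---nonempty since $\K_n$ is non-trivial---corresponds to a point of $\operatorname{Spc}(\K)$ whose local category is that of $\K_n$, under which $A$ maps to $\mathbb{1}_n^{\times n}$, and therefore the local degree of $A$ at that point equals $n$.

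The whole content of the argument lies in the two cited facts (i) and (ii); what remains to be checked by hand---and the only place that needs care---is that the componentwise constructions on the \emph{infinite} product $\K$ honestly produce a tt-ring and honest tt-functors $\pi_n$, so that (i) and (ii) apply verbatim with $\K_n$ an arbitrary, possibly exotic, non-trivial tt-category; non-triviality of $\K_n$ is exactly what pins $\deg(\mathbb{1}_n^{\times n})$ down to $n$ rather than to something smaller. I would also stress that no single point of $\operatorname{Spc}(\K)$ need carry infinite degree: the infinitude of $\deg(A)$ comes only from the fact that the finite local degrees of $A$, taken over its infinitely many components, are unbounded.
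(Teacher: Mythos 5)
Your argument is correct and is essentially the paper's own proof: both check that $A$ is a tt-ring componentwise, project via the tt-functor $\mathrm{pr}_n$ to get $\mathbb{1}_n^{\times n}$, and combine Balmer's facts that $\deg(\mathbb{1}_n^{\times n})=n$ (\cite[Theorem 3.9]{Bal11}) with the non-increase of degree along tt-functors (\cite[Theorem 3.7]{Bal11}) to force $\deg(A)=\infty$. The only addition is your optional local/Verdier-quotient reformulation, which is a harmless restatement of the same idea.
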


\begin{proof}
    It is clear that $A$ is a tt-ring with component-wise multiplication, and a component-wise bilinear section. On the other hand, by the definition of $\K$, the projection functor $$\mathrm{pr}_n\colon \K\to\K_n$$ is a tensor triangulated functor for each $n\geq1$. In particular, $\mathrm{pr}_n(A)= \mathbb{1}_n^{\times n}$ which has finite degree $n$ (see \cite[Theorem 3.9]{Bal11}). Then $A$ has infinite degree, otherwise it contradicts \cite[Theorem 3.7]{Bal11}. 
\end{proof}

\begin{remark}
By \cite[Theorem 3.8]{Bal11}, it follows that there exists a prime $\P$ in $\K$ such that the tt-ring $ q_\P (A) $ has infinite degree in  $\K_\P$.  Therefore placing the adjective \textit{local}  on an essentially small tensor triangulated category is not enough to guarantee that tt-rings have finite degree.
\end{remark}

At first glance, our example of a tt-ring of infinite degree seems to live in an artificial tensor triangulated category. However, it is possible to find this type of example in practice, for instance in the study of stable module categories for infinite groups. 

Let $\mathrm{CAlg}(\mathrm{Pr}^L_{\textrm{st}})$ denote the $\infty$-category of stable homotopy theories, that is, presentable, symmetric monoidal, stable $\infty$-categories with cocontinuous tensor product in each variable\footnote{Also known as stable homotopy theories.}. Let $2\textrm{-}\mathrm{Ring}$ denote the $\infty$-category of  of essentially small, symmetric monoidal, stable $\infty$-categories with exact tensor product in each variable.  We refer to \cite[Definition 2.14]{Mat16}) for further details about these $\infty$-categories. 

\begin{example}\label{Example tt-ring of infinite degree in stmod}
    Let $G$ be the fundamental group of the following graph of finite groups,     
    
    \centerline{\xymatrix{ 
\ddots \ar@{<-}[rd]^{0}  & &  G_2 \ar@{<-}[rd]^{0} \ar@{<-}[ld]_{0}  & & G_1 \ar@{<-}[ld]_0   \\ & 0   & &  0  &  
}
}

\noindent where $G_n$ is a non-trivial finite group, for $n\geq1$.  In other words, the group $G$ corresponds to the free product of the groups $G_n$. In particular, $G$ is a group of type $\Phi$ (see \cite{Tal}). By \cite[Theorem 3.3]{Gom23}, the stable module $\infty$-category $\StMod(kG)$ (see \cite[Definition 2.13]{Gom23}) decomposes in terms of the above graph of groups, that is, we have an equivalence
    $$\mathrm{StMod}(kG)\simeq \prod_{n\in\mathbb{N}} \mathrm{StMod}(kG_n)$$ in $\mathrm{CAlg}(\mathrm{Pr}^L_{\textrm{st}})$.  Note that dualizable objects in $\StMod(kG)$ are detected component-wise via this equivalence. In other words, we have a similar decomposition in $2\textrm{-}\mathrm{Ring}$ for the dualizable part of $\StMod(kG)$, i.e., the symmetric monoidal, stable $\infty$-category on the dualizable objects of $\StMod(kG)$.  Moreover, this factorization induces a product decomposition at the level of homotopy categories. Hence the homotopy category of the dualizable part of  $\StMod(kG)$ satisfies the hypothesis of Theorem \ref{example of infinite degree}. 
\end{example}


In practice, essentially small tensor triangulated categories arise as the dualizable part of a \textit{bigger} tensor triangulated category which, for instance, admits small coproducts, just as in Example \ref{Example tt-ring of infinite degree in stmod}. Then we can consider tt-rings in a tensor triangulated category which sits inside a bigger one. In particular, the framework of rigidly-compactly generated tensor triangulated categories has been extensively studied (see for instance \cite{BHS21}). In fact, all tt-rings that have been proved to have finite degree in \cite[Section 4]{Bal11} sit in the dualizable part a rigidly-compactly generated tensor triangulated category, so we might think these are the conditions we should impose on a tensor triangulated category to guarantee that any tt-ring has finite degree. We will see in Example \ref{compact tt-ring of infinite degree} that this is not the case.

 Recall that an object $x$ in a triangulated category $\K$ with small coproducts is \textit{compact} if the functor $\mathrm{Hom}(x,-)$ commutes with small coproducts. In particular, the subcategory $\K^c$ of compact objects remains triangulated.

\begin{definition}
    A tensor triangulated category $\K$ is \textit{rigidly-compactly generated} if $\K^c$ is essentially small, the smallest triangulated subcategory containing $\K^c$  which is closed under small coproducts is $\K$, and the class of compact objects coincides with the class of dualizable objects.  In this case, $\K^c$ remains tensor triangulated. 
\end{definition}

\begin{remark}\label{dualizables son compactos si la unidad lo es}

For a general tensor triangulated category $\K$ with small coproducts, compact objects are not necessarily dualizable, and vice versa, dualizable objects are not necessarily compact. However, if the monoidal unit of $\K$ is compact, then any dualizable object in $\K$ is compact. This follows from the fact that a dualizable object $x$ and its dual $y$ determine adjoint functors $x\otimes-\dashv y\otimes-$. 
\end{remark}

\begin{example}\label{compact tt-ring of infinite degree}
    For $i\in \mathbb{N}$, let $\K_i$ be a non-trivial rigid $2\textrm{-ring}$. Define $\K:=\prod_{i\in\mathbb{N}} \K_i$ in $2\textrm{-}\mathrm{Ring}$. Note that $\K$ is a rigid $2\textrm{-ring}$. Let $\mathcal{L}$ denote the $\mathrm{Ind}$-completion of $\K$ which lies in $\mathrm{CAlg}(\mathrm{Pr}^L_{\textrm{st}})$ (see \cite[Section 2]{NP23}). In particular, the compact objects of $\mathcal{L}$ are precisely the elements of $\K$. Since the inclusion functor $$\K\hookrightarrow \mathcal{L}$$ is strongly monoidal, we deduce that any compact element in $\mathcal{L}$ is dualizable. Therefore the homotopy category of $\mathcal{L}$ is a rigidly-compactly generated tensor triangulated category. In particular, we can construct a tt-ring in the dualizable part of $\mathcal{L}$, just as in Theorem \ref{example of infinite degree}, which has infinite degree.  
\end{example}

\noindent \begin{bf}Acknowledgments.\end{bf} I deeply thank my supervisor Jos\'e Cantarero for his support and for many interesting conversations on this work. I thank Paul Balmer and Luca Pol for helpful comments on this project. This work is part of the author's PhD thesis.

\bibliographystyle{alpha}
\bibliography{mybibfile}

\end{document}